\documentclass[11pt]{amsart}

\usepackage[T1]{fontenc}
\usepackage{lmodern}
\usepackage{amsmath,amsthm,amssymb}
\usepackage{booktabs,array,tikz,placeins,microtype}
\usepackage[colorlinks,linkcolor=blue,citecolor=green,
urlcolor=blue]{hyperref}
\usepackage[capitalise,noabbrev]{cleveref}
\hypersetup{pdftitle={Weighted singular vectors in common-base self-similar sets},
pdfauthor={Taehyeong Kim and Jaemin Park}}

\numberwithin{equation}{section}

\newtheorem{theorem}{Theorem}[section]
\newtheorem{corollary}[theorem]{Corollary}
\newtheorem{lemma}[theorem]{Lemma}
\newtheorem{proposition}[theorem]{Proposition}
\theoremstyle{definition}
\newtheorem{definition}[theorem]{Definition}
\theoremstyle{definition}
\newtheorem{remark}[theorem]{Remark}

\title[Weighted singular vectors in self-similar sets]
{Weighted singular vectors in common-base self-similar sets}
\author{Taehyeong Kim}
\address{Department of Mathematics, Brandeis University, 415 South Street,
Waltham, MA 02453, USA}
\email{taehyeongkim@brandeis.edu}
\author{Jaemin Park}
\address{Department of Mathematics and Research Institute of Natural Sciences,
Sookmyung Women's University, Cheongpa-ro 47-gil 100, Yongsan-ku,
Seoul 04310, Korea}
\email{jaemin.park@sookmyung.ac.kr}
\date{}
\subjclass[2020]{Primary 11J13; Secondary 11J83, 28A80}

\begin{document}

\begin{abstract}
We prove a lower bound for the Hausdorff dimension of weighted totally
irrational singular vectors in affine-spanning common-base integral
self-similar sets satisfying the open set condition. For the middle-third
Cantor square, the bound improves the previously known explicit lower bound.
\end{abstract}
\maketitle

\section{Introduction}\label{introduction-and-main-results}

Dirichlet's theorem (1842), one of the foundational results in Diophantine
approximation, asserts that, for every \(x\in\mathbb R^n\) and every \(T>1\),
there are \(q\in\mathbb Z\) and \(\mathbf p\in\mathbb Z^n\) such that
\begin{equation}\label{eq:dirichlet}
0<q\le T,
\qquad
\|qx-\mathbf p\|\le T^{-1/n}.
\end{equation}
Here \(\|y\|:=\max_{1\le i\le n}|y_i|\) denotes the max norm on
\(\mathbb R^n\).
The vector \(x\) is \emph{singular} if Dirichlet's inequality can be improved
by an arbitrary factor: for every \(\varepsilon>0\), the same conclusion as in
\eqref{eq:dirichlet} holds with \(\varepsilon T^{-1/n}\) in place of
\(T^{-1/n}\) for all sufficiently large \(T\). We denote the set of singular
vectors in \(\mathbb R^n\) by \(\operatorname{Sing}(n)\). Khintchine
introduced singular vectors in \cite{Khi26}. In dimension one, the singular
vectors are precisely the rational numbers, so the dimension problem is
trivial; it becomes nontrivial in higher dimension. We therefore assume
\(n\ge2\) throughout. With \(\dim_H\) denoting Hausdorff dimension, Cheung
\cite{Che11} proved that
\(\dim_H\operatorname{Sing}(2)=4/3\), and Cheung--Chevallier \cite{CC16}
proved that \(\dim_H\operatorname{Sing}(n)=n^2/(n+1)\) for every \(n\ge2\).
The corresponding matrix problems were studied by Kadyrov et al.\
\cite{KKLM17} and Das et al.\ \cite{DFSU24}. We restrict attention to vectors.

Weighted singularity allows different approximation rates in the
coordinates. For a weight
\(\mathbf w=(w_1,\ldots,w_n)\), where
\(1>w_1\ge\cdots\ge w_n>0\) and \(\sum_{i=1}^n w_i=1\),
a vector \(x\in\mathbb R^n\) is \emph{\(\mathbf w\)-singular} if, for every
\(\varepsilon>0\), there is \(T_0>1\) such that for every \(T>T_0\) there
are \(q\in\mathbb Z\) and \(\mathbf p\in\mathbb Z^n\) satisfying
\begin{equation}\label{eq:weighted-singularity}
0<q<T,
\qquad
\max_{1\le i\le n}|qx_i-p_i|^{1/w_i}<\frac{\varepsilon}{T}.
\end{equation}
We denote the set of such vectors by \(\operatorname{Sing}(\mathbf w)\).
When \(w_i=1/n\) for every \(i\), this definition recovers
\(\operatorname{Sing}(n)\).
In dimension two, Liao--Shi--Solan--Tamam \cite{LSST20} proved
\(\dim_H\operatorname{Sing}(\mathbf w)=2-1/(1+w_1)\). For general \(n\),
the authors \cite{KP24} obtained the lower bound
\(\dim_H\operatorname{Sing}(\mathbf w)\ge n-1/(1+w_1)\).
A vector \(x=(x_1,\ldots,x_n)\in\mathbb R^n\) is called
\emph{totally irrational} if
$1,x_1,\ldots,x_n$ are linearly independent over $\mathbb Q$.
We set
\[
\operatorname{Sing}^*(\mathbf w)
:=\{x\in\operatorname{Sing}(\mathbf w):x\text{ is totally irrational}\}.
\]
For equal weights we use the abbreviation \(\operatorname{Sing}^*(n)\).

Singular vectors have also been studied on submanifolds and fractals.
Kleinbock--Moshchevitin--Weiss \cite{KMW21} constructed totally irrational
singular vectors with large uniform exponents on broad classes of analytic
manifolds and fractals, while Shah--Yang \cite{SY24} obtained
Hausdorff-dimension upper bounds on affine subspaces in terms of their
Diophantine exponents. In the weighted setting, Datta--Tamam \cite{DT24}
proved inheritance and existence results, and
Kleinbock--Moshchevitin--Warren--Weiss \cite{KMWW25} established strong
intersection properties and quantitative rates for uniform approximation.

A basic unresolved dimension problem in the fractal setting already appears
for the middle-third Cantor square. Let \(C\) denote the middle-third Cantor
set, and write \(C^2=C\times C\). Bugeaud--Cheung--Chevallier
\cite[Problem~6]{BCC19} asked for the
Hausdorff dimension of
\[
\operatorname{Sing}(2)\cap C^2.
\]
To the best of our knowledge, its exact value remains unknown. Khalil's upper
bound \cite{Kha20}, specialized to \(C^2\), gives
\[
\dim_H\bigl(\operatorname{Sing}(2)\cap C^2\bigr)
\le
\frac23\dim_H(C^2)
=
\frac{4\log2}{3\log3}.
\]
In the weighted setting, Aggarwal--Ghosh
\cite[Theorem~1.3, case~(2), and Remark~1.4]{AG26} give the upper bound
\[
\dim_H\bigl(\operatorname{Sing}(\mathbf w)\cap C^2\bigr)
\le
\frac{2w_1}{1+w_1}\dim_H(C^2).
\]
\begin{samepage}
On the lower-bound side, Schleischitz \cite[Theorem~4.2]{Sch22} obtained
\[
\dim_H\bigl(\operatorname{Sing}^*(2)\cap C^2\bigr)>0.1255.
\]
The following theorem gives a weighted lower bound and, in the unweighted
case, improves this estimate.
\end{samepage}

\begin{theorem}\label[theorem]{thm:cantor-square}
Let \(C\) be the middle-third Cantor set and let
\(\mathbf w=(w_1,w_2)\), where
\(1>w_1\ge w_2>0\) and \(w_1+w_2=1\). Then
\[
\dim_H\bigl(\operatorname{Sing}^*(\mathbf w)\cap C^2\bigr)
\ge
\left(\frac{1-w_1}{1+w_1}\right)^2\dim_H(C^2)
=
2\left(\frac{1-w_1}{1+w_1}\right)^2\frac{\log2}{\log3}.
\]
In particular,
\[
\dim_H\bigl(\operatorname{Sing}^*(2)\cap C^2\bigr)
\ge \frac{2\log2}{9\log3}.
\]
\end{theorem}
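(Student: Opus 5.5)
Theorem~\ref{thm:cantor-square} should follow from a general lower bound for affine-spanning common-base integral self-similar sets with the open set condition. The plan is to prove that general statement and then specialize. Here \(C^2\) is generated by the four maps \(x\mapsto x/3+a\) with \(a\in\{0,2/3\}^2\). The base is \(3\), the contraction is integral, and the attractor affinely spans \(\mathbb R^2\). The open set condition holds with the open unit square. The target inequality reads
\[
\dim_H\bigl(\operatorname{Sing}^*(\mathbf w)\cap K\bigr)\ge\Bigl(\frac{1-w_1}{1+w_1}\Bigr)^2\dim_H K .
\]
With \(\dim_H C^2=2\log2/\log3\) and \(w_1=1/2\), the factor is \((1/3)^2=1/9\), which gives \(2\log2/(9\log3)\).

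The construction is a Cantor-type nested family of approximating rationals \(\mathbf p_k/q_k\), with \(q_k\) powers of \(3\) times bounded integers. Common base means that \(3^m K\subset K+\) lattice translates. So points of \(K\) near \(\mathbf p/q\) with \(q\) a power of \(3\) are again scaled copies of \(K\): if \(\mathbf p/3^m\) is a left endpoint of a level-\(m\) cylinder, the whole cylinder sits at distance \(\lesssim 3^{-m}\) from it.

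\begin{enumerate}
\item Choose a rapidly growing sequence of levels \(m_k\). At stage \(k\), inside the current cylinder, pick a rational \(\mathbf p_k/q_k\) whose weighted neighbourhood \(|q_kx_i-p_{k,i}|\le \varepsilon_k q_{k+1}^{-w_i}\) is kept.
\item The neighbourhood is anisotropic: the box has side \(q_k^{-1}q_{k+1}^{-w_i}\) in coordinate \(i\). Cover it by cylinders of \(K\) at the scale of the smaller side, and keep only the cylinders lying in the box. This produces the next generation.
\item Singularity: for \(T\in[q_k,q_{k+1}]\), use \(q_k\) when \(T\) is near \(q_k\) and \(q_{k+1}\) otherwise. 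With \(\varepsilon_k\to0\) this gives \eqref{eq:weighted-singularity}. This is the standard interlacing argument of \cite{KP24} and \cite{LSST20}.
\item Total irrationality: at each stage, exclude the cylinders lying near the countably many rational affine lines of bounded height. Affine spanning ensures that only a proportion tending to zero is lost.
\end{enumerate}

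The dimension is estimated with the mass distribution principle, using the natural self-similar measure restricted to the tree. The number of children per stage is about \(3^{\dim_H K\cdot(\text{log-scale gained})}\), reduced by the anisotropy. The size of the box relative to the parent uses only the \(w_2\)-direction, since we must fit whole cylinders in the short side \(q_k^{-1}q_{k+1}^{-w_1}\). Optimizing \(q_{k+1}=q_k^{\lambda}\) should give the ratio \(\frac{1-w_1}{1+w_1}\) once from the scale comparison. A second factor \(\frac{1-w_1}{1+w_1}\) arises because children at a stage concentrate on a single rational, so balls of intermediate radius see a deficit. The product is the square in the theorem.

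The main obstacle is the counting step. We need enough rationals \(\mathbf p/q\) with \(q\le Q\) near points of \(K\) inside a given cylinder whose neighbourhoods actually meet \(K\) substantially, with \(q\) having the common-base structure. I would first restrict to \(q=3^m\cdot r\) with \(r\) from a fixed finite set and use self-similarity to get exact counts. The second \(\frac{1-w_1}{1+w_1}\) factor is where the bound is lossy, and a careful mass-distribution estimate at intermediate scales is needed to confirm it.
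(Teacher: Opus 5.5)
You specialize correctly ($b=3$, $\mathcal D=\{0,2\}^2$, OSC with $(0,1)^2$, $\dim_H C^2=2\log2/\log3$, and $w_1=1/2$ giving the factor $1/9$). The general bound, however, is never established. The step you call the main obstacle is left open: finding, inside every surviving cylinder, enough rationals of the right denominator whose weighted neighbourhoods contain substantial pieces of $K$. The exponent $\bigl(\frac{1-w_1}{1+w_1}\bigr)^2$ is asserted (``should give'', ``needed to confirm'') rather than derived.

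The missing idea is that no counting of rationals is needed. For \emph{every} prefix $\xi_1,\dots,\xi_j$, the point $\pi(\xi_1,\dots,\xi_j,\mathbf a_*,\mathbf a_*,\dots)$ lies in $K$ and equals $\mathbf p/q$ with $q=(b-1)b^j$ and $\mathbf p\in\mathbb Z^n$. Every $x=\pi(\xi)$ whose digits $\xi_{j+1},\dots,\xi_{j+L}$ all equal $\mathbf a_*$ then satisfies $|qx_i-p_i|\le\Delta_{\mathcal D}b^{-L}$, with $\Delta_{\mathcal D}$ the diameter of $\mathcal D$. Hence $\max_i|qx_i-p_i|^{1/w_i}\le C_0b^{-L/w_1}$.

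Your remark about cylinder corners is this fact (for $C^2$ with $\mathbf a_*=\mathbf 0$) applied at the wrong depth. The level-$j$ cylinder only gives $|qx_i-p_i|=O(1)$; the level-$(j+L)$ sub-cylinder obtained by appending $L$ constant digits gives $O(b^{-L})$. So one just freezes the digits on disjoint blocks $\{j_k+1,\dots,j_k+L_k\}$ and leaves all other digits free. The kept pieces are isotropic sub-cylinders, which is why only $w_1$ enters. Your anisotropic covering in step~2 is therefore unnecessary, and for a general OSC set the number of cylinders inside a thin box is not controlled by $\dim_HK$ anyway.

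Take $\alpha>w_1/(1-w_1)$, $L_k=\lceil\alpha j_k\rceil$, $j_{k+1}=\lfloor L_k/w_1\rfloor-\lceil\sqrt{j_k}\rceil$ and $q_k=(b-1)b^{j_k}$. Then $q_{k+1}C_0b^{-L_k/w_1}\le(b-1)C_0b^{-\sqrt{j_k}}\to0$, so for large $k$ the denominator $q_k$ alone serves every $T\in(q_k,q_{k+1}]$. Your step~3 cannot switch to $q_{k+1}$ inside this range, since \eqref{eq:weighted-singularity} requires $q<T$.

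By the OSC cylinder count and the mass distribution principle, the dimension is at least $\dim_HK$ times the lower density of free positions. That density is attained along the block ends $j_k+L_k$ and equals $\frac{1-\alpha/(\rho-1)}{1+\alpha}$ with $\rho=\alpha/w_1$. At the maximizer $\alpha=2w_1/(1-w_1)$, \emph{both} factors equal $\frac{1-w_1}{1+w_1}$. This is the precise form of your two-factor heuristic: the numerator is the long-run proportion of free digits, and $1/(1+\alpha)$ is the deficit seen by balls at the end of a frozen block.

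Finally, deleting cylinders near rational lines stage by stage would need a quantitative estimate you do not give. Instead, show that the product measure (uniform digits at free positions, $\mathbf a_*$ at frozen ones) gives zero mass to every affine hyperplane $\{u\cdot x=t\}$. Since $\mathcal D$ affinely spans, $u\cdot\mathcal D$ has at least two values. Along a sufficiently sparse set of free positions, once the other digits are fixed, the value $u\cdot x$ determines the projected digits there, so each fibre is null. The ball estimate then survives restriction to the full-measure set of totally irrational points.
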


Since \(2\log2/(9\log3)\approx0.1402\), \cref{thm:cantor-square} improves
the explicit estimate of Schleischitz. Beyond the Cantor square, these bounds
extend in different directions: Khalil's upper-bound theorem applies to
self-similar fractals satisfying the open set condition (OSC), the weighted
upper bound of Aggarwal--Ghosh applies to Cartesian products of homogeneous
one-dimensional self-similar sets satisfying OSC, and Schleischitz's lower
bounds apply to Cartesian products of common-base missing-digit sets. The
construction behind
\cref{thm:cantor-square} yields the following lower bound for common-base
integral self-similar sets.

For \(A\subset\mathbb R^n\), write \(\operatorname{aff}_{\mathbb R}(A)\) for
the smallest affine subspace of \(\mathbb R^n\) containing \(A\). For a
common-base integral self-similar set \(K\) with digit set
\(\mathcal D\subset\mathbb Z^n\) as in \cref{def:common-base-fractal}, if
\(\operatorname{aff}_{\mathbb R}(\mathcal D)\ne\mathbb R^n\), then \(K\) is
contained in a rational affine hyperplane and hence
\(K\subset\operatorname{Sing}(\mathbf w)\). This case is trivial, so we
restrict to
\(\operatorname{aff}_{\mathbb R}(\mathcal D)=\mathbb R^n\).

\begin{theorem}\label[theorem]{thm:intro-common-base}
Let \(K\subset\mathbb R^n\) be a common-base integral self-similar set with
digit set \(\mathcal D\subset\mathbb Z^n\) as in
\cref{def:common-base-fractal}, and suppose that its defining system satisfies
the open set condition as in \cref{def:osc}. Assume that
\(\operatorname{aff}_{\mathbb R}(\mathcal D)=\mathbb R^n\). Then
\[
\dim_H\bigl(\operatorname{Sing}^*(\mathbf w)\cap K\bigr)
\ge
\left(\frac{1-w_1}{1+w_1}\right)^2\dim_HK.
\]
\end{theorem}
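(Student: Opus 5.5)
We will prove \cref{thm:intro-common-base} with a Cantor-type construction inside $K$, built from a sequence of best-approximation denominators, and then apply a mass distribution argument. Write $K=\{\sum_{k\ge1}b^{-k}d_k: d_k\in\mathcal D\}$ with integer base $b\ge2$ and $N=\#\mathcal D$ digits. By OSC, $\dim_H K=\log N/\log b$. The key arithmetic feature is that a point of $K$ whose digit expansion is eventually periodic, or which lies in a rational affine subspace determined by finitely many digits, has denominators that are powers of $b$ times small factors. Cylinders of level $m$ are translates of $b^{-m}K$ by points of $b^{-m}\mathbb Z^n$.

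\textbf{Step 1: the self-similar Dirichlet improvement.} Choose a rapidly increasing sequence of levels $m_1<m_2<\cdots$ and integers $q_j\approx b^{m_j}$ (essentially $q_j=b^{m_j}$, adjusted by a bounded factor). Consider $x\in K$ whose digits in a prescribed window are fixed. Then $q_jx$ lies within $b^{-(\text{window length})}$ of $\mathbb Z^n$ in each coordinate. To get the weights, we shift the window per coordinate. Since a common base forces the same digit in all coordinates at each level, we instead use $\operatorname{aff}_{\mathbb R}(\mathcal D)=\mathbb R^n$: there are digit blocks whose contributions span all directions. A block of $\ell$ fixed digits then pins $x$ to a cylinder of size $b^{-m_j-\ell}$, hence $\|q_jx-\mathbf p\|\lesssim b^{-\ell}$. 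For the weighted version, we fix $\ell_1$ digits and then only constrain the remaining coordinates over a further stretch, using a linear-algebra choice of digit blocks that moves only coordinates $2,\dots,n$. This gives $|q_jx_i-p_i|\lesssim b^{-\ell_i}$ with $\ell_i\propto w_i$.

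\textbf{Step 2: choosing the scales.} For each $T\in[q_j,q_{j+1}]$, the approximation $q_j$ must satisfy $\max_i|q_jx_i-p_i|^{1/w_i}<\varepsilon/T$. This requires $b^{-\ell_i/w_i}\le \varepsilon b^{-m_{j+1}}$, i.e. roughly $\ell_i\approx w_i m_{j+1}$ and $m_j+\ell_1\lesssim m_{j+1}$, with slack $\ell_i$ growing by $\to\infty$ extra digits so that $\varepsilon\to0$. Optimising in the spirit of \cite{KP24}, the ratio $m_{j+1}/m_j$ is taken to be $(1+w_1)/(1-w_1)$. Within each period $[m_j,m_{j+1}]$, the free digits form a fraction of roughly $(1-w_1)/(1+w_1)$ when digits are counted with coordinate weights. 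Because the constrained windows of consecutive periods overlap geometrically, the lower Hausdorff dimension loses a second factor: the liminf of $\log(\#\text{cylinders})/\log(\text{size}^{-1})$ is attained just after a constrained block. That worst case yields $\bigl((1-w_1)/(1+w_1)\bigr)^2\log N/\log b$.

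\textbf{Step 3: total irrationality and the dimension estimate.} We put the uniform measure on the constructed Cantor set, with equal mass on admissible digits at free levels. The mass distribution principle, together with OSC (which bounds how many level-$m$ cylinders meet a ball of radius $b^{-m}$), gives the lower bound. We check the worst ratio at levels $m_j+\ell_1$, which produces the squared factor. Total irrationality costs no dimension: the set of $x$ in a fixed rational affine hyperplane meets our Cantor set in a subset of strictly smaller dimension, and there are only countably many such hyperplanes. Alternatively, we impose at sparse levels extra digit choices that destroy any fixed rational relation.

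\textbf{Main obstacle.} The hardest step is Step 1 in the weighted case: producing approximations with different rates per coordinate when the contraction $b^{-1}$ is the same in every direction. We would first try to choose, using affine spanning, digits $d,d'$ and a rational change of variables so that suitable integer combinations of blocks are divisible by high powers of $b$ in coordinates $2,\dots,n$ only. Getting the denominators $q_j$ to remain $\asymp b^{m_j}$ while achieving this is the delicate point.
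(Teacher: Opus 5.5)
\textbf{There is a genuine gap: Step 1 is left unresolved, and the route you sketch for it is misdirected and unnecessary.}

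Since $w_1$ is the largest weight, the condition $|qx_i-p_i|<(\varepsilon/T)^{w_i}$ is most stringent for $i=1$. Approximating all coordinates to precision about $T^{-w_1}$ therefore already suffices. There is no need (and, for a non-product digit set, no evident way) to give coordinates $2,\dots,n$ separate precisions. Your plan to constrain those coordinates over a further stretch beyond $\ell_1$ would give them more precision, which contradicts your own $\ell_i\propto w_i\le w_1$.

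The missing idea is that one block of a single constant digit does everything. Suppose $\xi_{m+1}=\cdots=\xi_{m+\ell}=\mathbf a_*$. Put $q=(b-1)b^m$, and let $\mathbf p/q$ be the point of $K$ with the same prefix followed by $\mathbf a_*$ forever. Then $qx-\mathbf p=(b-1)b^{-\ell}\bigl(y-\mathbf a_*/(b-1)\bigr)$, where $y\in K$ is the tail. Hence $\|qx-\mathbf p\|\le\Delta_{\mathcal D}b^{-\ell}$ with $\Delta_{\mathcal D}=\max_{\mathbf a,\mathbf a'\in\mathcal D}\|\mathbf a-\mathbf a'\|$, and so $|qx_i-p_i|^{1/w_i}\le\Delta_{\mathcal D}^{1/w_n}b^{-\ell/w_1}$ for every $i$. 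This handles all $T$ with $q<T\lesssim\varepsilon b^{\ell/w_1}$, and it explains why the final bound involves only $w_1$.

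Constancy of the block is essential. Your assertion that any block of $\ell$ fixed digits gives $\|q_jx-\mathbf p\|\lesssim b^{-\ell}$ with $q_j\asymp b^{m_j}$ is false in general. A typical cylinder of size $b^{-m_j-\ell}$ contains no rational point with denominator $\asymp b^{m_j}$.

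\textbf{The rest is right in outline but wrong in detail.} The outline (frozen blocks, free digits elsewhere, OSC counting plus mass distribution, liminf attained at block ends) is sound.

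\emph{Scale ratio.} Take block starts $m_j$ and block lengths $\alpha m_j\approx w_1m_{j+1}$, plus a slowly growing slack such as $\sqrt{m_j}$ to absorb $\varepsilon$. The free-digit density then has liminf $\frac{\alpha(1-w_1)-w_1}{(\alpha-w_1)(1+\alpha)}$, which is maximized at $\alpha=2w_1/(1-w_1)$. So consecutive block starts should grow by $2/(1-w_1)$; the quantity $(1+w_1)/(1-w_1)$ is the ratio of a block's end to its start. Your choice $m_{j+1}/m_j=(1+w_1)/(1-w_1)$ gives only $(1-w_1)^2/\bigl(2(1+w_1^2)\bigr)$, e.g.\ $1/10$ instead of $1/9$ at equal weights. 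It is also inconsistent with your stated per-period free fraction. No coordinate weighting enters the digit count: each free position contributes $\log\#\mathcal D$.

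\emph{Total irrationality.} Your claim that each rational hyperplane meets the set in strictly smaller dimension is unproved. By countable stability it would also have to hold uniformly below the target exponent. The clean route is to show that the digit-freezing product measure gives zero mass to every affine hyperplane $\{u\cdot x=t\}$. Full affine span gives $u\cdot\mathcal D$ at least two values. Choose free positions $r_1<r_2<\cdots$ with gaps $L$ satisfying $\operatorname{diam}(u\cdot\mathcal D)/(b^L-1)<\min_{c\ne c'}|c-c'|$. Conditioning on all other digits (Fubini), the projected digit sequence on these positions is then uniquely determined. Its mass is at most $(m/\#\mathcal D)^N\to0$, where $m<\#\mathcal D$ bounds the number of digits sharing a value of $u\cdot\mathbf a$. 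Restricting the measure to totally irrational points then keeps full mass and the same ball estimate.
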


The relation among these vector-case dimension results is summarized in
\cref{tab:related-work}.

\begin{table}[ht]
\centering
\footnotesize
\begin{tabular}{@{}
  >{\raggedright\arraybackslash}p{0.17\textwidth}
  >{\raggedright\arraybackslash}p{0.29\textwidth}
  >{\raggedright\arraybackslash}p{0.48\textwidth}@{}}
\toprule
Reference & Setting & Conclusion \\
\midrule
\cite{Che11}
& Ambient \(\mathbb R^2\), unweighted
& \(\dim_H\operatorname{Sing}(2)=4/3\) \\
\specialrule{0.25pt}{1.5pt}{1.5pt}
\cite{CC16}
& Ambient \(\mathbb R^n\), unweighted
& \(\dim_H\operatorname{Sing}(n)=n^2/(n+1)\) \\
\specialrule{0.25pt}{1.5pt}{1.5pt}
\cite{LSST20}
& Ambient \(\mathbb R^2\), weighted
& \(\dim_H\operatorname{Sing}(\mathbf w)=2-1/(1+w_1)\) \\
\specialrule{0.25pt}{1.5pt}{1.5pt}
\cite{KP24}
& Ambient \(\mathbb R^n\), weighted
& \(\dim_H\operatorname{Sing}(\mathbf w)\ge n-1/(1+w_1)\) \\
\specialrule{0.25pt}{1.5pt}{1.5pt}
\cite{Kha20}
& Self-similar fractals, unweighted
& Hausdorff-dimension upper bound\newline
  \(\dim_H(\operatorname{Sing}(2)\cap C^2)
  \le \frac23\dim_H(C^2)\) \\
\specialrule{0.25pt}{1.5pt}{1.5pt}
\cite{AG26}
& Homogeneous self-similar products, weighted
& Hausdorff-dimension upper bound\newline
  \(\dim_H(\operatorname{Sing}(\mathbf w)\cap C^2)
  \le \frac{2w_1}{1+w_1}\dim_H(C^2)\) \\
\specialrule{0.25pt}{1.5pt}{1.5pt}
\cite{Sch22}
& Common-base missing-digit products, unweighted
& Hausdorff-dimension lower bound\newline
  \(\dim_H(\operatorname{Sing}^*(2)\cap C^2)>0.1255\) \\
\specialrule{0.25pt}{1.5pt}{1.5pt}
This paper, \cref{thm:cantor-square}
& Middle-third Cantor square, weighted
& \(\dim_H(\operatorname{Sing}^*(\mathbf w)\cap C^2)
   \ge ((1-w_1)/(1+w_1))^2\dim_H(C^2)\) \\
\specialrule{0.25pt}{1.5pt}{1.5pt}
This paper, \cref{thm:intro-common-base}
& Affine-spanning common-base integral self-similar sets, weighted
& \(\dim_H(\operatorname{Sing}^*(\mathbf w)\cap K)
   \ge ((1-w_1)/(1+w_1))^2\dim_HK\) \\
\bottomrule
\end{tabular}
\vspace{8pt}
\caption{Selected Hausdorff-dimension results in the vector case.}
\label{tab:related-work}
\end{table}

We next briefly describe the main idea of the proof. In the base-\(b\) coding
of \(K\), we require all digits within each of a sequence of disjoint blocks
to equal one fixed digit. Each block gives the required rational approximation
for a range of values of the parameter \(T\) in
\eqref{eq:weighted-singularity}. The blocks
are arranged so that these ranges overlap and cover all sufficiently large
\(T\), making every point in the resulting set \(\mathbf w\)-singular. The
digits outside the prescribed blocks remain free. Under OSC, their lower
density yields the Hausdorff-dimension estimate used below; optimizing the
block lengths and positions gives the stated lower bound. Finally, the full
affine-span assumption ensures that rational affine hyperplanes have zero
measure for this construction, so the same bound holds after restricting to
totally irrational points.

The paper is organized as follows. In
\cref{common-base-fractals-and-digit-freezing}, we develop the common-base
setting, prove a dimension lower bound for sets obtained by prescribing digits,
and establish that the associated measures, when there are infinitely many
free positions, give zero mass to affine hyperplanes.
\Cref{constant-symbol-block-approximation} derives the approximation supplied
by a constant-symbol block. The block construction is carried out in
\cref{block-construction}, and the required parameter estimates are
proved in \cref{parameter-estimates}.

\FloatBarrier

\medskip
\noindent\textbf{Use of artificial intelligence.}
OpenAI Codex was used as an auxiliary tool in checking selected calculations
and arguments and in refining the exposition. The authors independently
verified every mathematical argument, determined the final content and wording,
and take full responsibility for the manuscript.

\section{Common-base fractals and digit-freezing measures}\label{common-base-fractals-and-digit-freezing}

This section introduces the common-base self-similar setting. We then derive a
uniform cylinder estimate from OSC and use it to obtain a Hausdorff-dimension
lower bound for sets obtained by prescribing digits. Under the full affine-span
hypothesis, we also show that the associated measures give zero mass to affine
hyperplanes provided that infinitely many positions remain free.

\begin{definition}[Common-base integral self-similar sets and cylinders]
\label[definition]{def:common-base-fractal}
Fix an integer \(b\ge2\) and a finite digit set
\(\mathcal D\subset\mathbb Z^n\) with \(\#\mathcal D\ge2\). The maps
\[
F_{\mathbf a}(x)=\frac{x+\mathbf a}{b},
\qquad \mathbf a\in\mathcal D,
\]
form a \emph{common-base integral self-similar system}. This is a special case
of Hutchinson's general self-similar construction in which all similarities
have the same linear part \(b^{-1}I\) and the digit vectors lie in
\(\mathbb Z^n\) \cite{Hut81}. The unique nonempty
compact set \(K\) satisfying
\[
K=\bigcup_{\mathbf a\in\mathcal D}F_{\mathbf a}(K)
\]
is its \emph{common-base integral self-similar set}.
\par\pagebreak
For a word
\(\xi=(\xi_1,\ldots,\xi_N)\in\mathcal D^N\), let
\[
F_\xi:=F_{\xi_1}\circ\cdots\circ F_{\xi_N},
\qquad
K_\xi:=F_\xi(K),
\]
and call \(K_\xi\) the corresponding level-\(N\) cylinder.
\end{definition}
\pagebreak[2]

The associated coding map is the surjection
\[
\pi:\mathcal D^{\mathbb N}\longrightarrow K,
\qquad
\pi(\eta):=\sum_{r=1}^{\infty}\eta_rb^{-r},
\]
so that \(K=\pi(\mathcal D^{\mathbb N})\). For every
\(\xi=(\xi_1,\ldots,\xi_N)\in\mathcal D^N\) and \(y\in K\), direct iteration
gives
\begin{equation}\label{eq:coding-identity}
F_\xi(y)=\sum_{r=1}^N\xi_rb^{-r}+b^{-N}y,
\qquad
K_\xi=\{\pi(\eta):\eta|_N=\xi\},
\end{equation}
where \(\eta|_N=(\eta_1,\ldots,\eta_N)\).

\begin{definition}[Open set condition]\label[definition]{def:osc}
The system in \cref{def:common-base-fractal} satisfies the
\emph{open set condition} (OSC) if there is a nonempty open set
\(O\subset\mathbb R^n\) such that
\[
F_{\mathbf a}(O)\subset O
\quad\text{for every }\mathbf a\in\mathcal D,
\qquad
F_{\mathbf a}(O)\cap F_{\mathbf a'}(O)=\varnothing
\quad\text{if }\mathbf a\ne\mathbf a'.
\]
\end{definition}

\begin{remark}
\label[remark]{rem:osc-dimension}
If the common-base system satisfies OSC, then Hutchinson's dimension theorem
\cite[Theorem~5.3]{Hut81} gives
\[
\dim_HK=\frac{\log(\#\mathcal D)}{\log b}.
\]
Equivalently, \(\#\mathcal D=b^{\dim_HK}\).
\end{remark}

The next lemma records a uniform cylinder-counting estimate under OSC.
Throughout, \(B(x,r)\) denotes the max-norm ball of radius \(r\) centered at
\(x\).

\begin{samepage}
\begin{lemma}\label[lemma]{lem:osc-counting}
Suppose that the common-base system satisfies OSC. Then there is
\(C_K<\infty\) such that
\[
\sup_{N\ge1}\sup_{x\in\mathbb R^n}
\#\left\{
\xi\in\mathcal D^N:
K_\xi\cap B(x,b^{-N})\ne\varnothing
\right\}
\le C_K.
\]
\end{lemma}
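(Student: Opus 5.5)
The plan is Hutchinson's classical volume-packing argument, made uniform in \(N\) by the fact that all maps share the contraction ratio \(b^{-1}\). Fix an OSC set \(O\) as in \cref{def:osc}. Since \(O\) is nonempty and open, it contains a max-norm ball \(B(z,\rho)\) with \(\rho>0\), and we may shrink \(O\) to be bounded, say \(O\subset B(0,R)\). Intersecting with a large ball could destroy the invariance \(F_{\mathbf a}(O)\subset O\); this is harmless, because the argument below only uses disjointness of the images. Since \(K\) is compact, we may also enlarge \(R\) so that both \(K\) and \(O\) lie in \(B(0,R)\).

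The first step is to iterate OSC. For distinct words \(\xi\ne\xi'\in\mathcal D^N\), I would show that \(F_\xi(O)\cap F_{\xi'}(O)=\varnothing\). Let \(j\) be the first index where the words differ. Then
\[
F_\xi(O)\subset F_{\xi_1\cdots\xi_{j-1}}F_{\xi_j}(O),
\qquad
F_{\xi'}(O)\subset F_{\xi_1\cdots\xi_{j-1}}F_{\xi'_j}(O),
\]
using \(F_{\mathbf a}(O)\subset O\) for the tail letters. The sets \(F_{\xi_j}(O)\) and \(F_{\xi'_j}(O)\) are disjoint, and the common prefix map is injective, so the two images are disjoint. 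This step uses the original OSC set, for which invariance holds. The bounded subset enters only in the volume count, and disjointness passes to subsets.

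The second step is the volume count. By \eqref{eq:coding-identity}, \(F_\xi(y)=t_\xi+b^{-N}y\) with \(t_\xi=\sum_{r\le N}\xi_r b^{-r}\). Hence \(K_\xi\subset B(t_\xi,b^{-N}R)\), and \(F_\xi(O)\) contains the ball \(B(t_\xi+b^{-N}z,\,b^{-N}\rho)\) and is contained in \(B(t_\xi,b^{-N}R)\). Suppose \(K_\xi\) meets \(B(x,b^{-N})\). Then \(\|t_\xi-x\|\le b^{-N}(1+R)\), so
\[
F_\xi(O)\subset B\bigl(x,\,b^{-N}(1+2R)\bigr).
\]
Each such \(F_\xi(O)\) contains a ball of Lebesgue measure \((2b^{-N}\rho)^n\), and these sets are pairwise disjoint by the first step. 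Comparing measures gives
\[
\#\{\xi\}\cdot(2\rho)^n b^{-Nn}\le \bigl(2(1+2R)\bigr)^n b^{-Nn}.
\]
Therefore \(C_K=((1+2R)/\rho)^n\) works, independently of \(N\) and \(x\).

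The only delicate point is the one already flagged: separating the role of the original OSC set, which gives invariance and hence disjointness of all level-\(N\) images, from the bounded ball inside it used for volume. Once that is done, everything is scale-invariant because the linear part is \(b^{-N}I\) for every word of length \(N\). This gives the required \(N\)-uniform bound.
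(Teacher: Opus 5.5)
Your proof is correct and follows the paper's argument: iterate OSC to get pairwise disjoint level-\(N\) images, then pack the scaled balls \(F_\xi(B(z,\rho))=B(F_\xi(z),\rho b^{-N})\) into a ball of radius \(O(b^{-N})\) around \(x\) and compare volumes. The only cosmetic difference is that the paper skips your bounded truncation of \(O\) and packs the images of the inner ball directly, locating them via \(\|F_\xi(z)-x\|\le(M+1)b^{-N}\) with \(M=\max_{y\in K}\|z-y\|\).
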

\end{samepage}

\begin{proof}
Fix \(N\ge1\) and \(x\in\mathbb R^n\). Let \(O\) be an open set as in
\cref{def:osc}. Choose \(z\in O\) and \(r>0\) such that
\(B(z,r)\subset O\), and put
\[
M:=\max_{y\in K}\|z-y\|.
\]
Set
\[
\mathcal I_{N,x}:=
\{\xi\in\mathcal D^N:K_\xi\cap B(x,b^{-N})\ne\varnothing\}.
\]
For each \(\xi\in\mathcal I_{N,x}\), since \(K_\xi=F_\xi(K)\), choose
\(y_\xi\in K\) such that \(F_\xi(y_\xi)\in B(x,b^{-N})\). Since
\(F_\xi\) has contraction ratio \(b^{-N}\),
\[
\|F_\xi(z)-x\|
\le b^{-N}\|z-y_\xi\|+b^{-N}
\le (M+1)b^{-N}.
\]
Iterating OSC, the sets \(F_\xi(O)\), \(\xi\in\mathcal D^N\), are pairwise
disjoint. Since \(B(z,r)\subset O\) and \(F_\xi\) is a similarity, the balls
\[
A_\xi:=F_\xi(B(z,r))=B(F_\xi(z),rb^{-N}),
\qquad \xi\in\mathcal I_{N,x},
\]
are pairwise disjoint, and the preceding bound gives
\(A_\xi\subset B(x,(M+1+r)b^{-N})\). Comparing volumes therefore yields
\[
\#\mathcal I_{N,x}\bigl(rb^{-N}\bigr)^n
\le
\bigl((M+1+r)b^{-N}\bigr)^n.
\]
Thus the counting estimate holds with
\(C_K=((M+1+r)/r)^n\). Since \(N\) and \(x\) were arbitrary, the bound is
uniform.
\end{proof}

\begin{definition}[Digit-freezing measures]
\label[definition]{def:digit-freezing}
Fix \(\mathbf a_*\in\mathcal D\). For \(P\subset\mathbb N\), define
\begin{equation}\label{eq:free-position-count}
U_P(N):=\#\bigl(\{1,\ldots,N\}\setminus P\bigr),
\qquad N\ge1,
\end{equation}
and
\begin{equation}
E(P):=\{\pi(\xi):\xi_r=\mathbf a_*\text{ for every }r\in P\}.
\end{equation}
The positions in \(P\) are \emph{prescribed}, while those in
\(\mathbb N\setminus P\) are \emph{free}; thus \(U_P(N)\) counts the free
positions up to \(N\).

To each \(P\subset\mathbb N\), associate the following measures. Let
\[
m_{\mathcal D}:=\frac{1}{\#\mathcal D}
\sum_{\mathbf a\in\mathcal D}\delta_{\mathbf a},
\]
where \(\delta_{\mathbf a}\) is the Dirac mass at \(\mathbf a\), and for
\(r\ge1\) set
\[
\nu_{P,r}:=
\begin{cases}
\delta_{\mathbf a_*},&r\in P,\\
m_{\mathcal D},&r\notin P.
\end{cases}
\]
Define
\[
\nu_P:=\bigotimes_{r=1}^{\infty}\nu_{P,r}
\quad\text{on }\mathcal D^{\mathbb N},
\qquad
\mu_P:=\pi_*\nu_P
\quad\text{on }K.
\]
We call \(\mu_P\) the \emph{digit-freezing measure} associated with \(P\).
It is supported on \(E(P)\), and in particular \(\mu_P(E(P))=1\).
\end{definition}

\begin{lemma}\label[lemma]{lem:digit-freezing}
Suppose that the common-base system satisfies OSC. Then
\[
\dim_HE(P)
\ge
\left(\liminf_{N\to\infty}\frac{U_P(N)}N\right)\dim_HK.
\]
\end{lemma}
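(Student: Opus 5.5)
The plan is to apply the mass distribution principle to the digit-freezing measure \(\mu_P\) of \cref{def:digit-freezing}. Since \(\mu_P(E(P))=1\), it suffices to show that for every \(s<\bigl(\liminf_N U_P(N)/N\bigr)\dim_HK\) there is \(C<\infty\) with \(\mu_P(B(x,\rho))\le C\rho^{s}\) for all \(x\in\mathbb R^n\) and all sufficiently small \(\rho>0\). If the liminf is zero there is nothing to prove, so assume it is a positive number \(\alpha\) and fix \(0<\alpha'<\alpha\).

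\textbf{Step 1: cylinder masses.} By the product structure of \(\nu_P\) and the identity \(K_\xi=\{\pi(\eta):\eta|_N=\xi\}\) from \eqref{eq:coding-identity}, the \(\nu_P\)-mass of the set of sequences with prefix \(\xi\in\mathcal D^N\) is either \(0\) (if \(\xi_r\ne\mathbf a_*\) for some \(r\in P\cap\{1,\ldots,N\}\)) or exactly \((\#\mathcal D)^{-U_P(N)}\). Using \cref{rem:osc-dimension}, \(\#\mathcal D=b^{\dim_HK}\), so this mass is \(b^{-N U_P(N)\dim_HK/N}\le b^{-N\alpha'\dim_HK}\) for all \(N\ge N_0(\alpha')\).

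\textbf{Step 2: covering a ball.} Given \(0<\rho<b^{-N_0}\), choose \(N\) with \(b^{-N-1}<\rho\le b^{-N}\). Then \(\pi^{-1}(B(x,\rho))\) is contained in the union of the prefix sets \([\xi]\) over \(\xi\in\mathcal D^N\) with \(K_\xi\cap B(x,b^{-N})\ne\varnothing\), since any \(\eta\) with \(\pi(\eta)\in B(x,\rho)\) has \(\pi(\eta)\in K_{\eta|_N}\). By \cref{lem:osc-counting} there are at most \(C_K\) such words, hence
\[
\mu_P(B(x,\rho))\le C_K\, b^{-N\alpha'\dim_HK}\le C_K\, b^{\alpha'\dim_HK}\rho^{\alpha'\dim_HK}.
\]
Note that cylinders may overlap as sets, but we bound the measure of the preimage, so overlaps only help.

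\textbf{Step 3: conclusion.} The mass distribution principle then gives \(\dim_HE(P)\ge\alpha'\dim_HK\), and letting \(\alpha'\uparrow\alpha\) proves the lemma. The only delicate point is Step 2, specifically that the counting bound must be uniform in \(N\) and \(x\), which is exactly what \cref{lem:osc-counting} supplies. The mass estimate in Step 1 is routine because \(\nu_P\) is a product measure.
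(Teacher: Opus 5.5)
Your proposal is correct and follows the paper's proof essentially step for step. Both arguments use the product structure to get cylinder masses $(\#\mathcal D)^{-U_P(N)}$ and rewrite them via $\#\mathcal D=b^{\dim_HK}$. Both then apply the uniform OSC counting bound of \cref{lem:osc-counting} to balls of radius between $b^{-(N+1)}$ and $b^{-N}$, and conclude with the mass distribution principle, letting the exponent increase to the liminf.
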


\begin{proof}
Put
\[
h:=\liminf_{N\to\infty}\frac{U_P(N)}N.
\]
For the measures in \cref{def:digit-freezing}, every
\(\xi\in\mathcal D^N\) compatible with the prescribed positions satisfies
\(\nu_P\{\eta:\eta|_N=\xi\}=(\#\mathcal D)^{-U_P(N)}\).

If \(h=0\), the claim is immediate. Assume \(h>0\),
and fix \(0<\tau<h\). For all sufficiently large \(N\), we have
\(U_P(N)\ge\tau N\). If
\(b^{-(N+1)}<R\le b^{-N}\), then
\(B(x,R)\subset B(x,b^{-N})\). By \cref{lem:osc-counting}, at most \(C_K\)
level-\(N\) cylinders meet \(B(x,R)\). Moreover,
\[
\pi^{-1}(B(x,R))\cap\operatorname{supp}\nu_P
\subset
\bigcup_{\substack{
\xi\in\mathcal D^N\\
K_\xi\cap B(x,R)\ne\varnothing
}}
\{\eta\in\operatorname{supp}\nu_P:\eta|_N=\xi\}.
\]
The union on the right has at most \(C_K\) nonempty sets, each of
\(\nu_P\)-mass \((\#\mathcal D)^{-U_P(N)}\). 
Since \(\#\mathcal D=b^{\dim_HK}\) by Remark~\ref{rem:osc-dimension},
we have
\[
\mu_P(B(x,R))
\le C_K(\#\mathcal D)^{-U_P(N)}
\le C_K\,b^{\tau\dim_HK}R^{\tau\dim_HK}.
\]
Here the last inequality uses \(U_P(N)\ge\tau N\) and
\(R>b^{-(N+1)}\).
The mass distribution principle
\cite[Principle~4.2]{Fal03} gives
\(\dim_HE(P)\ge\tau\dim_HK\). Letting \(\tau\uparrow h\) proves the claim.
\end{proof}

\begin{remark}
Under the hypothesis of Lemma~\ref{lem:digit-freezing}, although only the lower bound is needed below, one can in fact show that equality holds.
\end{remark}

To pass to totally irrational points, we use the following property.

\begin{lemma}\label[lemma]{lem:digit-totally-irrational}
Let \(P\subset\mathbb N\) have infinitely many free positions, and let
\(\nu_P,\mu_P\) be as in \cref{def:digit-freezing}. If
\(\operatorname{aff}_{\mathbb R}(\mathcal D)=\mathbb R^n\), then every affine
hyperplane has \(\mu_P\)-measure zero; in particular, \(\mu_P\)-almost every
point is totally irrational.
\end{lemma}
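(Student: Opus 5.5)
Fix an affine hyperplane \(H=\{x:\langle \mathbf c,x\rangle=t\}\) with \(\mathbf c\ne0\). The plan is to bound \(\mu_P(H)\) by a factor \(\theta<1\) for each free position, and then let the number of free positions go to infinity.

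Since \(\operatorname{aff}_{\mathbb R}(\mathcal D)=\mathbb R^n\), the linear functional \(\mathbf a\mapsto\langle\mathbf c,\mathbf a\rangle\) is not constant on \(\mathcal D\). Otherwise \(\mathcal D\) would lie in a translate of \(\mathbf c^\perp\), which is a proper affine subspace. Hence, for every real \(s\),
\[
m_{\mathcal D}\{\mathbf a:\langle\mathbf c,\mathbf a\rangle=s\}\le\theta:=1-\frac1{\#\mathcal D}<1 .
\]

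The key step is the following. Let \(r\) be a free position. Write \(\pi(\eta)=\eta_r b^{-r}+Y_r(\eta)\), where \(Y_r:=\sum_{k\ne r}\eta_k b^{-k}\) depends only on the coordinates \(\eta_k\) with \(k\ne r\). Under the product measure \(\nu_P\), the coordinate \(\eta_r\) has law \(m_{\mathcal D}\) and is independent of \(Y_r\). By Fubini,
\[
\mu_P(H)=\int \nu_{P,r}\bigl\{\mathbf a:\langle\mathbf c,\mathbf a\rangle=b^{r}(t-\langle\mathbf c,Y_r\rangle)\bigr\}\,d\nu(Y_r)\le\theta .
\]
This bound alone is not enough, because we need the factors from distinct free positions to multiply.

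To get a product, take free positions \(r_1<\cdots<r_k\) and condition on all coordinates except \(\eta_{r_1},\ldots,\eta_{r_k}\). Write \(x=Z+\sum_{j}\eta_{r_j}b^{-r_j}\), where \(Z\) is fixed after conditioning. The event \(x\in H\) becomes
\[
\sum_j b^{-r_j}\langle\mathbf c,\eta_{r_j}\rangle=s
\]
for a fixed real \(s\). Summing over the choices of \(\eta_{r_1},\ldots,\eta_{r_{k-1}}\), each choice fixes the required value of \(\langle\mathbf c,\eta_{r_k}\rangle\). That value is attained with conditional probability at most \(\theta\). Inducting downward on \(j\), the conditional probability is at most \(\theta^k\), with no independence issues because the \(\eta_{r_j}\) are i.i.d. and independent of \(Z\). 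Integrating gives \(\mu_P(H)\le\theta^k\). Since there are infinitely many free positions, \(k\) can be taken arbitrarily large, so \(\mu_P(H)=0\).

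For the totally irrational statement: if \(x\) is not totally irrational, then there are \((q,\mathbf p)\in\mathbb Z\times\mathbb Z^n\setminus\{0\}\) with \(q+\langle\mathbf p,x\rangle=0\). We must have \(\mathbf p\ne0\), since otherwise \(q\ne0\) and the equation \(q=0\) fails. So \(x\) lies in one of countably many rational affine hyperplanes. Each of these has \(\mu_P\)-measure zero, and countable subadditivity finishes the proof.

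The main obstacle is making the conditioning argument rigorous; the reduction to a counting statement over i.i.d. uniform digits handles it cleanly.
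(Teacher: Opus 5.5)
Your conditioning step, the one-position bound $\mu_P(H)\le\theta$, and the countable-union argument for total irrationality are fine. The gap is the passage from $\theta$ to $\theta^k$.

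Write $V:=\{\langle\mathbf c,\mathbf a\rangle:\mathbf a\in\mathcal D\}$, let $\lambda$ be the law of $\langle\mathbf c,\eta_r\rangle$ at a free position, and let $p_k(s)$ be the probability that $\sum_{j\le k}b^{-r_j}\langle\mathbf c,\eta_{r_j}\rangle=s$. Your summation over $(\eta_{r_1},\ldots,\eta_{r_{k-1}})$ correctly gives $p_k(s)\le\theta$, but only one factor. The honest recursion is $p_k(s)=\sum_{v\in V}\lambda(v)\,p_{k-1}(s-b^{-r_k}v)$, and it gives only $\sup p_k\le\sup p_{k-1}$. A new factor $\theta$ appears only if some $v\in V$ is excluded, that is, if not every target $s-b^{-r_k}v$ is an attainable sum of the earlier digits. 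Nothing in your argument ensures this. In fact it never uses the coefficients $b^{-r_j}$: run verbatim with all coefficients equal to $1$ and fair $\{0,1\}$-valued digits, it would give $\Pr(X_1+\cdots+X_k=k/2)\le 2^{-k}$ for even $k$, which is false. Independence is not the issue; non-injectivity of the digit-to-sum map (carries) is. For the Cantor square ($b=3$, $\mathcal D=\{0,2\}^2$) with $\mathbf c=(1,-3)$, the digit pairs $((2,0),(0,2))$ and $((0,0),(0,0))$ at two consecutive positions both have projected sum $0$. So the equation need not determine the projected digits.

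The missing idea is a mechanism that rules out such collisions. The paper's route:
\begin{itemize}
\item It uses only a sparse subsequence of free positions, with $r_{j+1}-r_j\ge L$ and $\operatorname{diam}(V)/(b^L-1)<\delta:=\min\{|v-v'|:v\ne v'\in V\}$. Such a subsequence exists because infinitely many positions are free.
\item At the first index $k$ where two projected sequences differ, the discrepancy $\ge\delta b^{-r_k}$ exceeds the whole tail $\operatorname{diam}(V)\sum_{\ell>k}b^{-r_\ell}$. Hence distinct projected sequences give distinct sums.
\item After conditioning on the other coordinates, at most one projected sequence $(v_j)$ solves the equation. Its probability genuinely factorizes as $\prod_{j\le N}\lambda(v_j)\le\theta^N\to0$.
\end{itemize}
Alternatively, your bound $\theta^k$ for arbitrary free positions can be rescued arithmetically:
\begin{itemize}
\item Let $G\subset\mathbb R$ be the group generated by $V-V$. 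It is free abelian of positive rank.
\item All sums $\sum_{j<k}b^{-r_j}x_j$ with $x_j\in V$ lie in one coset of $b^{-r_{k-1}}G$.
\item If every $v\in V$ were admissible for the last digit, then $V-V\subset b^{r_k-r_{k-1}}G\subset bG$. This forces $G=bG$, which is impossible.
\item Hence some $v^*$ is excluded, and $p_k(s)\le(1-\lambda(v^*))\sup p_{k-1}\le\theta\sup p_{k-1}$. This is exactly the induction you wanted.
\end{itemize}
Without one of these inputs, the multiplicative step is unproved.
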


\begin{proof}
Fix an affine hyperplane
\[
H=\{x\in\mathbb R^n:u\cdot x=t\},
\qquad
u\in\mathbb R^n\setminus\{0\},\quad t\in\mathbb R.
\]
Since \(\operatorname{aff}_{\mathbb R}(\mathcal D)=\mathbb R^n\), the set
\(u\cdot\mathcal D\) has at least two elements. Put
\[
\delta:=\min_{\substack{c,c'\in u\cdot\mathcal D\\c\ne c'}}|c-c'|>0.
\]
Choose \(L\ge1\) so that
\[
\frac{\operatorname{diam}(u\cdot\mathcal D)}{b^L-1}<\delta,
\]
and choose free positions
\[
r_1<r_2<\cdots,
\qquad r_{j+1}-r_j\ge L.
\]
Put
\[
S:=\{r_j:j\ge1\},
\qquad
\nu_S:=\bigotimes_{r\in S}m_{\mathcal D},
\qquad
\nu_{S^c}:=\bigotimes_{r\notin S}\nu_{P,r}.
\]
Since every position in \(S\) is free, the coordinate identification
\(\mathcal D^{\mathbb N}\simeq
\mathcal D^{S}\times\mathcal D^{S^c}\) identifies
\(\nu_P\) with \(\nu_S\otimes\nu_{S^c}\). For
\(\eta\in\mathcal D^{S^c}\), define
\[
B_\eta:=\{\zeta\in\mathcal D^S:\pi(\zeta,\eta)\in H\}.
\]
Fubini's theorem gives
\begin{equation}\label{eq:hyperplane-section-decomposition}
\mu_P(H)
=\nu_P\bigl(\pi^{-1}(H)\bigr)
=\int_{\mathcal D^{S^c}}
\nu_S(B_\eta)\,d\nu_{S^c}(\eta).
\end{equation}
Thus it remains to prove the following.

\medskip
\noindent\textbf{Claim.}
For every \(\eta\in\mathcal D^{S^c}\),
\[
\nu_S(B_\eta)=0.
\]

\smallskip
\begin{proof}[Proof of the claim]
By the choice of \(L\) and the gaps \(r_{j+1}-r_j\), for every \(j\ge1\),
\begin{equation}\label{eq:projected-tail-separation}
\begin{aligned}
\operatorname{diam}(u\cdot\mathcal D)\sum_{\ell>j}b^{-r_\ell}
&\le
\operatorname{diam}(u\cdot\mathcal D)b^{-r_j}
\sum_{q\ge1}b^{-qL}\\
&=
\frac{\operatorname{diam}(u\cdot\mathcal D)}{b^L-1}b^{-r_j}
<\delta b^{-r_j}.
\end{aligned}
\end{equation}

Fix \(\eta\in\mathcal D^{S^c}\). The equation defining \(B_\eta\) is
\begin{equation}\label{eq:section-projected-sum}
\sum_{j\ge1}(u\cdot\zeta_{r_j})b^{-r_j}
=t-\sum_{r\notin S}(u\cdot\eta_r)b^{-r}.
\end{equation}
For distinct sequences
\((c_j)_{j\ge1},(c_j')_{j\ge1}\in
(u\cdot\mathcal D)^{\mathbb N}\), put
\(k:=\min\{j:c_j\ne c_j'\}\). By
\eqref{eq:projected-tail-separation},
\[
\begin{aligned}
\left|\sum_{j\ge1}(c_j-c_j')b^{-r_j}\right|
&\ge
\delta b^{-r_k}
-\operatorname{diam}(u\cdot\mathcal D)\sum_{\ell>k}b^{-r_\ell}\\
&>0.
\end{aligned}
\]
Thus distinct projected sequences give distinct values of the left-hand side
of \eqref{eq:section-projected-sum}. Since its right-hand side is fixed by
\(\eta\), \eqref{eq:section-projected-sum} admits at most one projected
sequence. Put
\[
m:=\max_{c\in u\cdot\mathcal D}
\#\{\mathbf a\in\mathcal D:u\cdot\mathbf a=c\}<\#\mathcal D.
\]
The strict inequality holds because \(u\cdot\mathcal D\) has at least two elements.
If \(B_\eta=\varnothing\), the claim is immediate. Otherwise, let
\((c_j)_{j\ge1}\) be its projected sequence. For every \(N\ge1\),
\[
\begin{aligned}
0\le\nu_S(B_\eta)
&\le
\prod_{j=1}^N
\frac{\#\{\mathbf a\in\mathcal D:u\cdot\mathbf a=c_j\}}
{\#\mathcal D}\\
&\le
\left(\frac{m}{\#\mathcal D}\right)^N
\xrightarrow[N\to\infty]{}0.
\end{aligned}
\]
\end{proof}

The claim and \eqref{eq:hyperplane-section-decomposition} give
\(\mu_P(H)=0\).
Finally, the set of points that are not totally irrational is the countable
union
\[
\bigcup_{\mathbf p\in\mathbb Z^n\setminus\{0\}}
\ \bigcup_{p_0\in\mathbb Z}
\{x\in\mathbb R^n:\mathbf p\cdot x=-p_0\}
\]
of affine hyperplanes, so it has \(\mu_P\)-measure zero.
\end{proof}

\section{Approximation from constant-symbol blocks}
\label{constant-symbol-block-approximation}

Fix the weight \(\mathbf w=(w_1,\ldots,w_n)\) from the introduction. The next
lemma converts a constant-symbol block into the weighted approximation in
\eqref{eq:weighted-singularity}.

\begin{lemma}\label[lemma]{lem:constant-block-approximation}
There is a constant \(C_0\ge1\), depending only on \(\mathcal D\) and
\(\mathbf w\), with the following property. Suppose \(x=\pi(\xi)\in K\)
and there are integers \(j,L\ge1\) such that
\[
\xi_{j+1}=\cdots=\xi_{j+L}=\mathbf a_*;
\]
see \cref{fig:constant-block-coding}. Set \(q:=(b-1)b^j\). Then there is
\(\mathbf p\in\mathbb Z^n\) such that
\[
\max_{1\le i\le n}|qx_i-p_i|^{1/w_i}
\le C_0b^{-L/w_1}.
\]
Moreover, if \(\varepsilon>0\) and
\[
q<T<\frac{\varepsilon}{C_0}b^{L/w_1},
\]
then the same \(q\) and \(\mathbf p\) satisfy
the inequalities in \eqref{eq:weighted-singularity}.
\end{lemma}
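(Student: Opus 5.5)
We would start from the coding identity \eqref{eq:coding-identity} and split the expansion of \(x=\pi(\xi)\) into three pieces:
\[
x=\sum_{r=1}^{j}\xi_rb^{-r}+\sum_{r=j+1}^{j+L}\mathbf a_*b^{-r}+b^{-(j+L)}y,
\qquad y:=\pi(\sigma^{j+L}\xi)\in K,
\]
where \(\sigma\) denotes the shift. We then multiply by \(q=(b-1)b^j\) and treat each piece separately.

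The first piece becomes \((b-1)\sum_{r=1}^j\xi_rb^{j-r}\), which lies in \(\mathbb Z^n\) because the digits are integral. For the second piece, the geometric sum is \(\sum_{s=1}^{L}b^{-s}=(1-b^{-L})/(b-1)\), so
\[
q\sum_{r=j+1}^{j+L}\mathbf a_*b^{-r}=(b-1)\mathbf a_*\sum_{s=1}^{L}b^{-s}=\mathbf a_*(1-b^{-L}).
\]
This is the integer vector \(\mathbf a_*\) plus the error \(-\mathbf a_*b^{-L}\). The factor \(b-1\) in \(q\) is chosen exactly so that this cancellation occurs. The third piece contributes \((b-1)b^{-L}y\). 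We therefore take
\[
\mathbf p:=(b-1)\sum_{r\le j}\xi_rb^{j-r}+\mathbf a_*,
\]
which gives
\[
\|qx-\mathbf p\|\le b^{-L}\bigl(\|\mathbf a_*\|+(b-1)\max_{y\in K}\|y\|\bigr)=:C_1b^{-L}.
\]
The constant \(C_1\) depends only on \(\mathcal D\) and \(b\). Note that \(K\) is compact, and \(\max_{y\in K}\|y\|\le\max_{\mathbf a\in\mathcal D}\|\mathbf a\|/(b-1)\).

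Next we convert this to the weighted quantity. For each \(i\),
\[
|qx_i-p_i|^{1/w_i}\le (C_1b^{-L})^{1/w_i}.
\]
The one point requiring care, which we expect to be the main (albeit mild) obstacle, is making the exponent uniform. Because \(w_1\ge w_i\), we have \(1/w_i\ge1/w_1\), so
\[
b^{-L/w_i}\le b^{-L/w_1}.
\]
For the constant, \(C_1^{1/w_i}\le\max(1,C_1)^{1/w_n}\). Setting \(C_0:=\max(1,C_1)^{1/w_n}\) gives the first claim. Here \(C_0\) depends only on \(\mathcal D\) (together with \(b\), which is part of the system) and \(\mathbf w\).

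Finally, for the "moreover" part, we take \(q<T<\varepsilon C_0^{-1}b^{L/w_1}\). Then \(0<q<T\), since \(q\ge b-1\ge1\). We also get
\[
\max_i|qx_i-p_i|^{1/w_i}\le C_0b^{-L/w_1}<\varepsilon/T,
\]
because the assumption on \(T\) is equivalent to \(T\,C_0b^{-L/w_1}<\varepsilon\). These are exactly the inequalities in \eqref{eq:weighted-singularity}.
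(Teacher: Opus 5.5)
Your proposal is correct and follows essentially the paper's route: you use the same \(q=(b-1)b^j\) and \(\mathbf p=(b-1)\sum_{r\le j}\xi_rb^{j-r}+\mathbf a_*\), obtain the same error \(qx-\mathbf p=(b-1)b^{-L}\bigl(y-\mathbf a_*/(b-1)\bigr)\), and make the same exponent comparison \(1/w_i\ge 1/w_1\). The only cosmetic differences are in the constant. You bound the error by the triangle inequality and take \(\max(1,C_1)^{1/w_n}\). The paper bounds it by the digit diameter \(\Delta_{\mathcal D}\) and sets \(C_0=\Delta_{\mathcal D}^{1/w_n}\), which is automatically \(\ge1\) because the digits are distinct integer vectors.
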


\begin{figure}[!t]
\centering
\begin{tikzpicture}[
  digit/.style={draw, minimum height=8mm, minimum width=9mm,
    inner xsep=2pt, align=center, font=\small},
  long digit/.style={digit, minimum width=14mm},
  digit index/.style={font=\scriptsize, anchor=north},
  group label/.style={font=\scriptsize, align=center}
]
\node[group label] at (1.35,1.00) {arbitrary prefix};
\draw[gray!65] (-0.45,0.70) -- (3.15,0.70);
\node[group label] at (5.25,1.00) {constant-symbol block};
\draw[orange!75!black] (3.45,0.70) -- (7.05,0.70);
\node[group label] at (9.30,1.00) {arbitrary tail};
\draw[gray!65] (7.35,0.70) -- (11.25,0.70);

\node[digit, fill=gray!10] (d1) at (0,0) {\(\xi_1\)};
\node[digit index] at ([yshift=-1mm]d1.south) {\(1\)};
\node[digit, fill=gray!10] (d2) at (1.00,0) {\(\xi_2\)};
\node[digit index] at ([yshift=-1mm]d2.south) {\(2\)};
\node at (2.00,0) {\(\cdots\)};
\node[digit, fill=gray!10] (dj) at (2.70,0) {\(\xi_j\)};
\node[digit index] at ([yshift=-1mm]dj.south) {\(j\)};

\node[digit, fill=orange!20] (da) at (3.90,0) {\(\mathbf a_*\)};
\node[digit index] at ([yshift=-1mm]da.south) {\(j+1\)};
\node[digit, fill=orange!20] (db) at (5.10,0) {\(\mathbf a_*\)};
\node[digit index] at ([yshift=-1mm]db.south) {\(j+2\)};
\node at (6.05,0) {\(\cdots\)};
\node[digit, fill=orange!20] (dc) at (6.70,0) {\(\mathbf a_*\)};
\node[digit index] at ([yshift=-1mm]dc.south) {\(j+L\)};

\node[long digit, fill=gray!10] (dt1) at (8.00,0)
  {\(\xi_{j+L+1}\)};
\node[digit index] at ([yshift=-1mm]dt1.south) {\(j+L+1\)};
\node[long digit, fill=gray!10] (dt2) at (9.65,0)
  {\(\xi_{j+L+2}\)};
\node[digit index] at ([yshift=-1mm]dt2.south) {\(j+L+2\)};
\node at (10.90,0) {\(\cdots\)};
\end{tikzpicture}
\caption{A constant-symbol block as in
\cref{lem:constant-block-approximation}. The orange-shaded digits are fixed,
while the prefix and tail are arbitrary.}
\label{fig:constant-block-coding}
\end{figure}
\FloatBarrier

\begin{proof}
Set
\[
\Delta_{\mathcal D}
:=
\max_{\mathbf a,\mathbf a'\in\mathcal D}
\|\mathbf a-\mathbf a'\|,
\qquad
C_0:=\Delta_{\mathcal D}^{1/w_n}.
\]
Since \(\mathcal D\subset\mathbb Z^n\) has at least two elements,
\(\Delta_{\mathcal D}\ge1\), so \(C_0\ge1\).

Extend the prefix \((\xi_1,\ldots,\xi_j)\) by repeating
\(\mathbf a_*\) forever. The resulting point lies in \(K\) and is the
rational vector
\[
\frac{\mathbf p}{q}
:=\sum_{r=1}^j\xi_rb^{-r}
  +\sum_{r=j+1}^{\infty}\mathbf a_*b^{-r},
\]
where \(q=(b-1)b^j\) and
\[
\mathbf p
=(b-1)\sum_{r=1}^j\xi_rb^{j-r}+\mathbf a_*
\in\mathbb Z^n.
\]
Let
\(y:=\pi(\xi_{j+L+1},\xi_{j+L+2},\ldots)\in K\)
be the tail after the constant-symbol block. Since \(x\) and
\(\mathbf p/q\) have the same first \(j+L\) digits,
\[
x-\frac{\mathbf p}{q}
=\sum_{r=j+L+1}^{\infty}(\xi_r-\mathbf a_*)b^{-r}
=b^{-(j+L)}\left(y-\frac{\mathbf a_*}{b-1}\right).
\]
Multiplying by \(q=(b-1)b^j\) gives
\[
qx-\mathbf p
=(b-1)b^{-L}
\left(y-\frac{\mathbf a_*}{b-1}\right).
\]
Moreover,
\[
\left\|y-\frac{\mathbf a_*}{b-1}\right\|
\le\sum_{r=1}^\infty\|\xi_{j+L+r}-\mathbf a_*\|b^{-r}
\le
\Delta_{\mathcal D}\sum_{r=1}^\infty b^{-r}
=\frac{\Delta_{\mathcal D}}{b-1}.
\]
It follows that
\[
|qx_i-p_i|\le\Delta_{\mathcal D}b^{-L}
\qquad(1\le i\le n).
\]
For each \(i\), the inequalities \(w_n\le w_i\le w_1\) and \(b>1\) give
\[
|qx_i-p_i|^{1/w_i}
\le
\Delta_{\mathcal D}^{1/w_i}b^{-L/w_i}
\le
\Delta_{\mathcal D}^{1/w_n}b^{-L/w_1}
=C_0b^{-L/w_1}.
\]
Taking the maximum over \(i\) proves the first assertion. Finally, if
\(q<T<(\varepsilon/C_0)b^{L/w_1}\), then \(0<q<T\) and
\[
C_0b^{-L/w_1}<\frac{\varepsilon}{T},
\]
which proves the second assertion.
\end{proof}

\section{Block construction}\label{block-construction}

Fix \(C_0\) as in \cref{lem:constant-block-approximation}. The following
proposition provides the block locations and lengths used in the construction.
Its proof is given in \cref{parameter-estimates}.

\begin{proposition}\label[proposition]{prop:parameters}
For any fixed \(\alpha>w_1/(1-w_1)\), there exist sequences of positive
integers \(j_k=j_k(\alpha)\) and \(L_k=L_k(\alpha)\) such that the sets and
denominators
\[
B_k:=\{j_k+1,\ldots,j_k+L_k\},
\qquad
q_k:=(b-1)b^{j_k},
\qquad
P_\alpha:=\bigcup_{k\ge1}B_k
\]
satisfy
\[
j_k+L_k<j_{k+1}
\qquad(k\ge1).
\]
Moreover, for every \(\varepsilon>0\), there exists \(k_0\) such that
\begin{equation}\label{eq:block-cover}
(q_{k_0},\infty)
\subset
\bigcup_{k\ge k_0}
\left(q_k,\frac{\varepsilon}{C_0}b^{L_k/w_1}\right).
\end{equation}
Finally, with the notation of \eqref{eq:free-position-count}, set
\[
U_\alpha(N):=U_{P_\alpha}(N).
\]
Then
\begin{equation}\label{eq:block-density}
\liminf_{N\to\infty}\frac{U_\alpha(N)}N
=
h_{\mathbf w}(\alpha)
:=
\frac{\alpha(1-w_1)-w_1}{(\alpha-w_1)(1+\alpha)}.
\end{equation}
\end{proposition}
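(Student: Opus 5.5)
The plan is to take the blocks in geometric progression, with block length a fixed multiple \(\alpha\) of the block position and each next position a factor \(\alpha/w_1\) larger, up to a slowly growing correction. A heuristic computation shows this is the right scaling. If \(L_k\approx\alpha j_k\) and \(j_{k+1}\approx(\alpha/w_1)j_k\), then the prescribed positions up to \(N=j_k+L_k\) number about
\[
\alpha j_k\sum_{i\ge0}(w_1/\alpha)^i=\frac{\alpha^2 j_k}{\alpha-w_1}.
\]
The free proportion at that \(N\) is then about
\[
1-\frac{\alpha^2}{(\alpha-w_1)(1+\alpha)}=\frac{\alpha(1-w_1)-w_1}{(\alpha-w_1)(1+\alpha)}=h_{\mathbf w}(\alpha).
\]
Moreover, disjointness \(j_k+L_k<j_{k+1}\) asymptotically reads \(1+\alpha<\alpha/w_1\), which is exactly \(\alpha>w_1/(1-w_1)\).

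\textbf{Definition.} Choose \(j_1\) large and set recursively
\[
L_k:=\lceil\alpha j_k\rceil,\qquad j_{k+1}:=\lfloor L_k/w_1\rfloor-k .
\]
The subtracted \(k\) is slack that beats any fixed constant \(\log_b(C_0(b-1)/\varepsilon)\) eventually, and it is negligible against the geometric growth \(j_k\asymp(\alpha/w_1)^k\). Two facts follow from this recursion.
\begin{itemize}
\item \(j_{k+1}/j_k\to\alpha/w_1\) and \(L_k/j_k\to\alpha\). Since \(\alpha/w_1>1+\alpha\) strictly, the inequality \(j_k+L_k<j_{k+1}\) holds for all \(k\) once \(j_1\) is large enough. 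Here I use \(\alpha/w_1-1-\alpha>0\) together with the \(O(k)\) error being \(o(j_k)\); this needs a small induction showing \(j_k\) grows geometrically from a large \(j_1\).
\item All quantities are positive integers.
\end{itemize}

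\textbf{Covering \eqref{eq:block-cover}.} Given \(\varepsilon\), the condition \(q_{k+1}<(\varepsilon/C_0)b^{L_k/w_1}\) is equivalent to
\[
j_{k+1}+\log_b(b-1)<L_k/w_1+\log_b(\varepsilon/C_0).
\]
By construction \(L_k/w_1-j_{k+1}\ge k\), so this holds for all \(k\ge k_0(\varepsilon)\). Then consecutive intervals \((q_k,(\varepsilon/C_0)b^{L_k/w_1})\) and \((q_{k+1},\dots)\) overlap. Since \(q_k\uparrow\infty\), their union over \(k\ge k_0\) is the whole ray \((q_{k_0},\infty)\).

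\textbf{Density \eqref{eq:block-density}.} Write \(N-U_\alpha(N)\) for the number of prescribed positions up to \(N\). I will reduce the liminf to the block endpoints using the monotonicity of \(U_\alpha(N)/N\):
\begin{itemize}
\item For \(N\in B_k\), \(U_\alpha(N)\) is constant while \(N\) grows, so the ratio decreases; its minimum over the block is at \(N=j_k+L_k\).
\item On a gap \(j_k+L_k\le N\le j_{k+1}\), the count of prescribed positions is a constant \(c\), and \((N-c)/N\) increases in \(N\).
\end{itemize}
Hence the liminf equals \(\lim_k U_\alpha(j_k+L_k)/(j_k+L_k)\), provided that limit exists. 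I compute it by writing
\[
U_\alpha(j_k+L_k)=j_k+L_k-\sum_{i\le k}L_i,
\]
and using \(L_i=\alpha j_i+O(1)\) and \(j_i=(w_1/\alpha)^{k-i}j_k(1+o(1))\) uniformly in the relevant range. The \(O(i)\) errors sum to \(O(k^2)=o(j_k)\). This dominated geometric-sum computation gives \(h_{\mathbf w}(\alpha)\) by the algebra above.

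The main obstacle I expect is this last asymptotic bookkeeping: controlling the accumulated rounding and the \(-k\) slack uniformly so that the ratio \(j_i/j_k\) is close to \((w_1/\alpha)^{k-i}\). I would handle it by first proving \(j_{k+1}=(\alpha/w_1)j_k+O(k)\), and then bounding the tail of the sum over small \(i\) by a geometric series.
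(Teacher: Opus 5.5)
Your proposal is correct and follows essentially the paper's proof. Both use the recursion $L_k=\lceil\alpha j_k\rceil$, $j_{k+1}=\lfloor L_k/w_1\rfloor$ minus a slowly growing slack, the same overlap argument $q_{k+1}<(\varepsilon/C_0)b^{L_k/w_1}$ for the covering, and the same monotonicity reduction of the $\liminf$ to the endpoints $j_k+L_k$. The differences are minor: the paper takes the slack to be $\lceil\sqrt{j_k}\rceil$ rather than $k$, so the needed inequalities reduce to a single uniform condition on $j\ge j_1$, and it evaluates $\lim_k\sum_{h<k}L_h/j_k$ by Stolz--Ces\`aro using only $L_k/j_k\to\alpha$ and $j_{k+1}/j_k\to\alpha/w_1$, which avoids the error bookkeeping you identified as the main obstacle.
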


With the notation of \cref{def:digit-freezing}, define

\[
E_\alpha
:=E(P_\alpha)
=
\left\{
\pi(\xi):
\begin{array}{l}
\xi\in\mathcal D^{\mathbb N},\\
\xi_{j_k+r}=\mathbf a_*
\quad
(k\ge1,\ 1\le r\le L_k)
\end{array}
\right\}.
\]
Thus \(E_\alpha\) is obtained by fixing the digits on the blocks
\(B_k\), while all remaining digits are free.

\begin{corollary}\label[corollary]{cor:singularity}
One has

\[
E_\alpha\subset\operatorname{Sing}(\mathbf w)\cap K.
\]
\end{corollary}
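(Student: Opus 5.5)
The inclusion \(E_\alpha\subset K\) is immediate, since \(E_\alpha=\pi(\cdot)\) of a subset of \(\mathcal D^{\mathbb N}\) and \(K=\pi(\mathcal D^{\mathbb N})\). So the content is that every \(x\in E_\alpha\) is \(\mathbf w\)-singular. The plan is to combine the covering property \eqref{eq:block-cover} of \cref{prop:parameters} with \cref{lem:constant-block-approximation}, applied block by block.

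Fix \(x=\pi(\xi)\in E_\alpha\) and \(\varepsilon>0\). By definition of \(E_\alpha\), for every \(k\ge1\) we have \(\xi_{j_k+1}=\cdots=\xi_{j_k+L_k}=\mathbf a_*\). This is exactly the hypothesis of \cref{lem:constant-block-approximation} with \(j=j_k\) and \(L=L_k\), and \(j_k,L_k\ge1\) are positive integers as the lemma requires. For each \(k\) the lemma produces \(q=q_k=(b-1)b^{j_k}\) and some \(\mathbf p_k\in\mathbb Z^n\) with the following property: whenever
\[
q_k<T<\frac{\varepsilon}{C_0}b^{L_k/w_1},
\]
the pair \((q_k,\mathbf p_k)\) satisfies \eqref{eq:weighted-singularity}. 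Note that \(\mathbf p_k\) depends only on \(\xi\) and \(k\), not on \(T\).

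Next take \(k_0\) from \eqref{eq:block-cover} for this \(\varepsilon\) and set \(T_0:=\max(q_{k_0},2)>1\). Let \(T>T_0\). Then \(T\in(q_{k_0},\infty)\), so by \eqref{eq:block-cover} there is some \(k\ge k_0\) with \(T\in\bigl(q_k,(\varepsilon/C_0)b^{L_k/w_1}\bigr)\). The preceding paragraph then gives \(q,\mathbf p\) satisfying \(0<q<T\) and \(\max_i|qx_i-p_i|^{1/w_i}<\varepsilon/T\). Since \(\varepsilon\) was arbitrary, \(x\in\operatorname{Sing}(\mathbf w)\).

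There is no genuine obstacle here; all the analytic work sits in \cref{prop:parameters}. The only point needing care is checking that the lemma's strict inequalities match the open intervals in \eqref{eq:block-cover}, which they do. Also, the definition demands \(T_0>1\); taking the maximum with \(2\) takes care of this trivially.
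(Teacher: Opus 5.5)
Your proof is correct and follows the paper's argument essentially verbatim: you fix \(\varepsilon\), take \(k_0\) from \eqref{eq:block-cover}, place each \(T>q_{k_0}\) in some \(k\)-th interval with \(k\ge k_0\), and apply \cref{lem:constant-block-approximation} to the \(k\)-th prescribed block. The adjustment \(T_0=\max(q_{k_0},2)\) is harmless but unnecessary, since \(q_{k_0}=(b-1)b^{j_{k_0}}\ge b\ge2\) already.
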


\begin{proof}
Fix \(\varepsilon>0\), and choose \(k_0\) as in
\eqref{eq:block-cover}. If \(T>q_{k_0}\), then there is
\(k\ge k_0\) such that
\[
q_k<T<\frac{\varepsilon}{C_0}b^{L_k/w_1}.
\]
For every \(x\in E_\alpha\),
the \(k\)-th prescribed block and
\cref{lem:constant-block-approximation} give
\(\mathbf p_k\in\mathbb Z^n\) such that
\[
0<q_k<T,
\qquad
\max_{1\le i\le n}|q_kx_i-p_{i,k}|^{1/w_i}
<\frac{\varepsilon}{T}.
\]
Hence \(x\in\operatorname{Sing}(\mathbf w)\).
\end{proof}

\begin{corollary}\label[corollary]{cor:dimension}
Suppose that the common-base system satisfies OSC. Then
\[
\dim_HE_\alpha\ge h_{\mathbf w}(\alpha)\dim_HK.
\]
\end{corollary}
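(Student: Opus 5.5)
This corollary follows by combining two earlier results directly. By definition, \(E_\alpha=E(P_\alpha)\), where \(P_\alpha=\bigcup_k B_k\) is the set of prescribed positions from \cref{prop:parameters}. So I will apply \cref{lem:digit-freezing} with \(P=P_\alpha\). Its hypothesis is only that the common-base system satisfies OSC, which is exactly what the corollary assumes. The lemma then gives
\[
\dim_HE_\alpha\ge\left(\liminf_{N\to\infty}\frac{U_{P_\alpha}(N)}N\right)\dim_HK .
\]

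Next I use the notation \(U_\alpha(N)=U_{P_\alpha}(N)\) from \cref{prop:parameters}. Equation \eqref{eq:block-density} states that the \(\liminf\) equals \(h_{\mathbf w}(\alpha)\), and substituting this gives the claimed bound \(\dim_HE_\alpha\ge h_{\mathbf w}(\alpha)\dim_HK\).

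Two points need checking. First, \cref{lem:digit-freezing} puts no restriction on \(P\): it holds for an arbitrary subset of \(\mathbb N\), and the case where the \(\liminf\) is \(0\) is treated there separately. So nothing about the shape of \(P_\alpha\) (disjoint blocks, \(j_k+L_k<j_{k+1}\)) is needed at this step. Second, \(E(P)\) is defined with the same fixed digit \(\mathbf a_*\) that appears in the definition of \(E_\alpha\), so the two sets coincide exactly and no approximation is involved.

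There is no real obstacle here. All of the difficulty lies in establishing \eqref{eq:block-density}, which is deferred to \cref{parameter-estimates} and may be taken as given. The only care needed is to cite the lemma with \(P=P_\alpha\) and to note that OSC is assumed.
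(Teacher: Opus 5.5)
Your proposal is correct and is the paper's own argument: apply \cref{lem:digit-freezing} with \(P=P_\alpha\), then substitute the density identity \eqref{eq:block-density}, noting that \(U_\alpha=U_{P_\alpha}\). Your two checks are also accurate: the lemma places no restriction on \(P\), and \(E_\alpha=E(P_\alpha)\) holds exactly with the same fixed digit \(\mathbf a_*\).
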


\begin{proof}
This follows from \eqref{eq:block-density} and
\cref{lem:digit-freezing}, since \(U_\alpha=U_{P_\alpha}\).
\end{proof}

\begin{proof}[Proof of Theorem~\ref{thm:intro-common-base}]
By \cref{cor:singularity,cor:dimension}, the set \(E_\alpha\) is
contained in \(\operatorname{Sing}(\mathbf w)\cap K\) and has dimension at
least \(h_{\mathbf w}(\alpha)\dim_HK\) for every
\(\alpha>w_1/(1-w_1)\). Differentiation gives

\[
h_{\mathbf w}'(\alpha)
=
-\frac{
\alpha\bigl((1-w_1)\alpha-2w_1\bigr)
}{
(\alpha-w_1)^2(1+\alpha)^2
}.
\]

The unique critical point in the admissible interval is
\(\alpha_*:=2w_1/(1-w_1)\). Since the function tends to zero at both ends of
the interval, this point gives its maximum, and
\(h_{\mathbf w}(\alpha_*)=((1-w_1)/(1+w_1))^2\). Put
\(E=E_{\alpha_*}\) and \(\mu:=\mu_{P_{\alpha_*}}\), as in
\cref{def:digit-freezing}.
Since
\(h_{\mathbf w}(\alpha_*)>0\), there are infinitely many free positions, and
\cref{lem:digit-totally-irrational} shows that \(\mu\)-almost every point is
totally irrational. Since \(E\subset\operatorname{Sing}(\mathbf w)\), the set
\(E\cap\operatorname{Sing}^*(\mathbf w)\) has full \(\mu\)-measure. For every
\(s<h_{\mathbf w}(\alpha_*)\dim_HK\), the ball estimate in the proof of
\cref{lem:digit-freezing} remains valid after restricting \(\mu\) to this set.
Consequently,
\[
\dim_H\bigl(E\cap\operatorname{Sing}^*(\mathbf w)\bigr)
\ge h_{\mathbf w}(\alpha_*)\dim_HK.
\]
Since \(E\subset K\), this proves the theorem.
\end{proof}

\begin{proof}[Proof of Theorem~\ref{thm:cantor-square}]
The Cantor square \(C^2\) is the attractor of the base-\(3\) system with
digit set \(\{0,2\}^2\), which affinely spans \(\mathbb R^2\). It satisfies OSC with
\(O=(0,1)^2\), and \(\dim_H(C^2)=2\log2/\log3\). The weighted assertion is
therefore the special case \(b=3\), \(\mathcal D=\{0,2\}^2\) of
\cref{thm:intro-common-base}. The unweighted statement follows from
\(w_1=w_2=1/2\).
\end{proof}

\section{Block parameter estimates}\label{parameter-estimates}

\begin{proof}[Proof of Proposition~\ref{prop:parameters}]
Fix \(\alpha>w_1/(1-w_1)\).

\medskip
\noindent\textbf{Step 1: Choice, growth, and separation.}
Put
\[
\rho:=\frac{\alpha}{w_1},\qquad
\rho_0:=\frac{1+\rho}{2}.
\]
Since both \(\rho-\rho_0\) and
\(\alpha(1-w_1)/w_1-1\) are positive, choose \(j_1\) so large that, for
every \(j\ge j_1\),
\[
\min\left\{
\rho-\rho_0,\frac{\alpha(1-w_1)}{w_1}-1
\right\}j
\ge\sqrt j+3.
\]
Define recursively
\begin{equation}\label{eq:block-recursion}
L_k:=\lceil\alpha j_k\rceil,
\qquad
j_{k+1}:=\left\lfloor\frac{L_k}{w_1}\right\rfloor
-\lceil\sqrt{j_k}\rceil.
\end{equation}
The ceiling and floor bounds in \eqref{eq:block-recursion} give
\begin{equation}\label{eq:parameter-bounds}
\begin{aligned}
\alpha j_k&\le L_k<\alpha j_k+1,\\
\sqrt{j_k}
&\le \frac{L_k}{w_1}-j_{k+1}<\sqrt{j_k}+2,\\
\rho j_k-\sqrt{j_k}-2&<j_{k+1}
<\rho j_k+\frac1{w_1}.
\end{aligned}
\end{equation}

Suppose that \(j_k\ge j_1\). Then the first condition in the choice of
\(j_1\), together with \eqref{eq:parameter-bounds}, gives
\begin{equation}\label{eq:uniform-growth}
\begin{aligned}
j_{k+1}
&>\rho j_k-\sqrt{j_k}-2\\
&=\rho_0j_k+
  \bigl((\rho-\rho_0)j_k-\sqrt{j_k}-2\bigr)
\ge \rho_0j_k+1.
\end{aligned}
\end{equation}
Since \(\rho_0>1\), induction shows that \eqref{eq:uniform-growth} holds for
every \(k\) and that \(j_k\ge\rho_0^{k-1}j_1\). In particular,
\(j_k\to\infty\). Moreover,
\begin{align*}
j_{k+1}-j_k-L_k
&>L_k\left(\frac1{w_1}-1\right)-j_k-\sqrt{j_k}-2\\
&\ge
\left(\frac{\alpha(1-w_1)}{w_1}-1\right)j_k
-\sqrt{j_k}-2\\
&\ge1.
\end{align*}
Hence \(j_k+L_k<j_{k+1}\).

\medskip
\noindent\textbf{Step 2: Covering all large \(T\).}
Set
\[
\varepsilon_k
:=2(b-1)C_0b^{-(L_k/w_1-j_{k+1})}.
\]
By the middle estimate in \eqref{eq:parameter-bounds},
\[
0<\varepsilon_k
\le 2(b-1)C_0b^{-\sqrt{j_k}}\longrightarrow0.
\]
Furthermore, a direct cancellation gives
\[
\begin{aligned}
\frac{\varepsilon_k}{C_0}b^{L_k/w_1}
&=2(b-1)b^{-(L_k/w_1-j_{k+1})}b^{L_k/w_1}\\
&=2(b-1)b^{j_{k+1}}
=2q_{k+1}>q_{k+1}.
\end{aligned}
\]
Given \(\varepsilon>0\), choose \(k_0\) so that
\(\varepsilon_k<\varepsilon\) for \(k\ge k_0\). If \(T>q_{k_0}\), choose
\(k\ge k_0\) such that \(q_k<T\le q_{k+1}\), which is possible since
\(q_k\to\infty\). Then
\[
q_k<T\le q_{k+1}
<\frac{\varepsilon_k}{C_0}b^{L_k/w_1}
<\frac{\varepsilon}{C_0}b^{L_k/w_1}.
\]
Thus \(T\) belongs to the \(k\)-th interval in \eqref{eq:block-cover}.

\medskip
\noindent\textbf{Step 3: Density of free positions.}
Let \(N_k:=j_k+L_k\). Since the prescribed blocks are disjoint,
\(U_\alpha\) is constant on \([j_k,N_k]\) and increases with slope one on
\([N_k,j_{k+1}]\). Consequently, \(U_\alpha(N)/N\) decreases on the first
interval and increases on the second. Its successive local minima therefore
occur at \(N_k\), as illustrated in \cref{fig:free-position-count}, and
\begin{equation}\label{eq:endpoint-density}
\liminf_{N\to\infty}\frac{U_\alpha(N)}N
=
\liminf_{k\to\infty}
\frac{j_k-\sum_{h<k}L_h}{j_k+L_k}.
\end{equation}

\begin{figure}[!t]
\centering
\begin{tikzpicture}[
  x=1.12cm,
  y=0.74cm,
  axis label/.style={font=\small},
  interval label/.style={font=\scriptsize, align=center},
  tick label/.style={font=\scriptsize, anchor=north}
]
\definecolor{PrescribedBlock}{RGB}{247,231,214}
\definecolor{FreeInterval}{RGB}{232,241,234}
\definecolor{CountBlue}{RGB}{41,92,145}
\definecolor{BlockOrange}{RGB}{196,105,36}

\def\xA{1.65}
\def\xB{3.55}
\def\xC{6.35}
\def\xD{8.25}

\fill[PrescribedBlock] (\xA,0.35) rectangle (\xB,4.70);
\fill[FreeInterval] (\xB,0.35) rectangle (\xC,4.70);
\fill[PrescribedBlock] (\xC,0.35) rectangle (\xD,4.70);

\draw[->,thick] (0,0.35) -- (9.05,0.35)
  node[right,axis label] {\(N\)};
\draw[->,thick] (0,0.35) -- (0,5.05)
  node[above,axis label] {\(U_\alpha(N)\)};

\draw[CountBlue,very thick] (0,0.35) -- (\xA,2.05);
\draw[BlockOrange,very thick] (\xA,2.05) -- (\xB,2.05);
\draw[CountBlue,very thick] (\xB,2.05) -- (\xC,3.75);
\draw[BlockOrange,very thick] (\xC,3.75) -- (\xD,3.75);
\draw[CountBlue,very thick] (\xD,3.75) -- (8.85,4.45);

\foreach \x/\lab in {
  \xA/{\(j_k\)},
  \xB/{\(N_k\)},
  \xC/{\(j_{k+1}\)},
  \xD/{\(N_{k+1}\)}
}{
  \draw (\x,0.25) -- (\x,0.45);
  \node[tick label] at (\x,0.20) {\lab};
}

\fill[CountBlue] (\xB,2.05) circle (1.6pt);
\fill[CountBlue] (\xD,3.75) circle (1.6pt);

\node[interval label] at (2.60,4.43)
  {prescribed block\\\(B_k\)};
\node[interval label] at (4.95,4.43)
  {free\\positions};
\node[interval label] at (7.30,4.43)
  {prescribed block\\\(B_{k+1}\)};
\node[interval label,BlockOrange] at (2.60,1.77)
  {constant};
\node[interval label,CountBlue,rotate=26] at (4.95,2.73)
  {slope \(1\)};
\node[interval label,BlockOrange] at (7.30,3.47)
  {constant};
\end{tikzpicture}
\caption{The count is constant on prescribed blocks and has slope one
elsewhere, so \(U_\alpha(N)/N\) is minimized at \(N_k\).}
\label{fig:free-position-count}
\end{figure}
\FloatBarrier

By \eqref{eq:parameter-bounds} and the fact that \(j_k\to\infty\),
\[
\frac{L_k}{j_k}\longrightarrow\alpha,
\qquad
\frac{j_{k+1}}{j_k}\longrightarrow\rho.
\]
Hence
\[
\frac{L_k}{j_{k+1}-j_k}
=
\frac{L_k/j_k}{j_{k+1}/j_k-1}
\longrightarrow
\frac{\alpha}{\rho-1}.
\]
Since \(j_k\) is strictly increasing and tends to infinity, the
Stolz--Ces\`aro theorem gives
\[
\lim_{k\to\infty}\frac{\sum_{h<k}L_h}{j_k}
=
\lim_{k\to\infty}\frac{L_k}{j_{k+1}-j_k}
=
\frac{\alpha}{\rho-1}.
\]
Consequently,
\[
\frac{U_\alpha(N_k)}{N_k}
=
\frac{j_k-\sum_{h<k}L_h}{j_k+L_k}
\longrightarrow
\frac{1-\alpha/(\rho-1)}{1+\alpha}
=
\frac{\alpha(1-w_1)-w_1}{(\alpha-w_1)(1+\alpha)}.
\]

Together with \eqref{eq:endpoint-density}, this proves
\eqref{eq:block-density}.
\end{proof}


\begin{thebibliography}{KMWW25}

\bibitem[AG26]{AG26}
Gaurav Aggarwal and Anish Ghosh, \emph{On the packing dimension of weighted
  singular matrices on fractals}, Advances in Mathematics \textbf{493} (2026),
  110931.

\bibitem[BCC19]{BCC19}
Yann Bugeaud, Yitwah Cheung, and Nicolas Chevallier, \emph{Hausdorff dimension
  and uniform exponents in dimension two}, Mathematical Proceedings of the
  Cambridge Philosophical Society \textbf{167} (2019), no.~2, 249--284.

\bibitem[CC16]{CC16}
Yitwah Cheung and Nicolas Chevallier, \emph{Hausdorff dimension of singular
  vectors}, Duke Mathematical Journal \textbf{165} (2016), no.~12, 2273--2329.

\bibitem[Che11]{Che11}
Yitwah Cheung, \emph{Hausdorff dimension of the set of singular pairs}, Annals
  of Mathematics \textbf{173} (2011), no.~1, 127--167.

\bibitem[DFSU24]{DFSU24}
Tushar Das, Lior Fishman, David Simmons, and Mariusz Urba{\'n}ski, \emph{A
  variational principle in the parametric geometry of numbers}, Advances in
  Mathematics \textbf{437} (2024), 109435.

\bibitem[DT24]{DT24}
Shreyasi Datta and Nattalie Tamam, \emph{On weighted singular vectors for
  multiple weights}, 2024, arXiv:2409.17105v2.

\bibitem[Fal03]{Fal03}
Kenneth~J. Falconer, \emph{Fractal geometry: Mathematical foundations and
  applications}, second ed., John Wiley \& Sons, Chichester, 2003.

\bibitem[Hut81]{Hut81}
John~E. Hutchinson, \emph{Fractals and self-similarity}, Indiana University
  Mathematics Journal \textbf{30} (1981), no.~5, 713--747.

\bibitem[Kha20]{Kha20}
Osama Khalil, \emph{Singular vectors on fractals and projections of
  self-similar measures}, Geometric and Functional Analysis \textbf{30} (2020),
  no.~2, 482--535.

\bibitem[Khi26]{Khi26}
Aleksandr Khintchine, \emph{\"Uber eine klasse linearer diophantischer
  approximationen}, Rendiconti del Circolo Matematico di Palermo \textbf{50}
  (1926), 170--195.

\bibitem[KKLM17]{KKLM17}
Shirali Kadyrov, Dmitry Kleinbock, Elon Lindenstrauss, and Gregory~A. Margulis,
  \emph{Singular systems of linear forms and non-escape of mass in the space of
  lattices}, Journal d'Analyse Math\'{e}matique \textbf{133} (2017), 253--277.

\bibitem[KMW21]{KMW21}
Dmitry Kleinbock, Nikolay Moshchevitin, and Barak Weiss, \emph{Singular vectors
  on manifolds and fractals}, Israel Journal of Mathematics \textbf{245}
  (2021), no.~2, 589--613.

\bibitem[KMWW25]{KMWW25}
Dmitry Kleinbock, Nikolay Moshchevitin, Jacqueline~M. Warren, and Barak Weiss,
  \emph{Singularity, weighted uniform approximation, intersections and rates},
  Compositio Mathematica \textbf{161} (2025), no.~11, 2990--3016.

\bibitem[KP24]{KP24}
Taehyeong Kim and Jaemin Park, \emph{On a lower bound of {Hausdorff} dimension
  of weighted singular vectors}, Mathematika \textbf{70} (2024), no.~3, e12252.

\bibitem[LSST20]{LSST20}
Lingmin Liao, Ronggang Shi, Omri~N. Solan, and Nattalie Tamam, \emph{Hausdorff
  dimension of weighted singular vectors in {$\mathbb R^2$}}, Journal of the
  European Mathematical Society \textbf{22} (2020), no.~3, 833--875.

\bibitem[Sch22]{Sch22}
Johannes Schleischitz, \emph{Metric results on inhomogeneously singular
  vectors}, 2022, arXiv:2201.01527, version 3.

\bibitem[SY24]{SY24}
Nimish~A. Shah and Pengyu Yang, \emph{An upper bound of the {Hausdorff}
  dimension of singular vectors on affine subspaces}, Transactions of the
  American Mathematical Society, Series B \textbf{11} (2024), no.~36,
  1249--1265.

\end{thebibliography}
\end{document}